\documentclass[11pt, a4paper, twoside]{article}

\usepackage[
  top=1.25in, bottom=1.25in,
  left=1.25in, right=1.25in
]{geometry}

\usepackage[T1]{fontenc}
\usepackage[sc,osf]{mathpazo}
\usepackage{eulervm}
\usepackage[scaled=0.85]{inconsolata}

\usepackage[final]{microtype}

\usepackage{amsmath, amssymb, amsthm}
\usepackage{mathtools}
\usepackage{bm}

\usepackage[dvipsnames]{xcolor}
\usepackage[
  colorlinks = true,
  linkcolor  = RoyalBlue,
  citecolor  = ForestGreen,
  urlcolor   = RoyalBlue
]{hyperref}

\usepackage{aliascnt}

\newtheorem{theorem}{Theorem}[section]

\newcommand{\makealiasenv}[2]{%
  \newaliascnt{#1}{theorem}%
  \newtheorem{#1}[#1]{#2}%
  \aliascntresetthe{#1}%
}

\makealiasenv{lemma}{Lemma}
\makealiasenv{proposition}{Proposition}
\makealiasenv{corollary}{Corollary}
\makealiasenv{conjecture}{Conjecture}
\makealiasenv{claim}{Claim}

\theoremstyle{definition}
\makealiasenv{definition}{Definition}
\makealiasenv{example}{Example}
\makealiasenv{exercise}{Exercise}
\makealiasenv{problem}{Problem}
\makealiasenv{construction}{Construction}

\theoremstyle{remark}
\makealiasenv{remark}{Remark}
\makealiasenv{notation}{Notation}
\makealiasenv{observation}{Observation}

\newtheorem*{theorem*}{Theorem}
\newtheorem*{lemma*}{Lemma}
\newtheorem*{remark*}{Remark}

\usepackage[capitalise, noabbrev]{cleveref}

\crefname{theorem}{Theorem}{Theorems}
\crefname{lemma}{Lemma}{Lemmas}
\crefname{proposition}{Proposition}{Propositions}
\crefname{corollary}{Corollary}{Corollaries}
\crefname{conjecture}{Conjecture}{Conjectures}
\crefname{claim}{Claim}{Claims}
\crefname{definition}{Definition}{Definitions}
\crefname{example}{Example}{Examples}
\crefname{exercise}{Exercise}{Exercises}
\crefname{remark}{Remark}{Remarks}
\crefname{equation}{Equation}{Equations}
\crefname{section}{Section}{Sections}
\crefname{figure}{Figure}{Figures}
\crefname{table}{Table}{Tables}

\usepackage[square, numbers, sort&compress]{natbib}
\usepackage{graphicx}
\usepackage{booktabs}
\usepackage{enumitem}
\usepackage{csquotes}
\usepackage{stmaryrd}
\usepackage{authblk}
\usepackage{tikz}
\usepackage{fancyhdr}
\usepackage{titlesec}
\titleformat{\section}{\large\bfseries\scshape}{\thesection.}{0.6em}{}
\titleformat{\subsection}{\normalsize\bfseries}{\thesubsection.}{0.5em}{}

\usepackage{abstract}

\newcommand{\R}{\mathbb{R}}

\newcommand{\esssup}{\operatorname*{ess\,sup}}

\begin{document}

\title{\textbf{\Large GLOBAL UCP FOR PARABOLIC FRACTIONAL $p$-LAPLACE EQUATION WITH VERY ROUGH POTENTIALS}}

\author[]{HARSH PRASAD}
\affil[]{Fakult\"{a}t f\"{u}r Mathematik, Universit\"{a}t Bielefeld\\
  \texttt{hprasad@math.uni-bielefeld.de}}

\date{\today}
\maketitle
\pagestyle{fancy}
\fancyhf{}
\fancyhead[C]{%
  \ifodd\value{page}%
    GLOBAL UCP %
  \else%
    PRASAD%
  \fi%
}
\fancyfoot[C]{\thepage}
\renewcommand{\headrulewidth}{0pt}
\begin{abstract}
We show that the global unique continuation principle holds for the parabolic fractional $p-$Laplace equation with very rough potentials $V(x,t) \in L^{p'}_tW^{-s,p'}_x$. Whereas the result is new even for the fractional $p-$Laplace operator, the corresponding local problem remains open even with zero potential. The short proof eschews extension techniques and Carleman estimates. 
\end{abstract}

\medskip\noindent
\textbf{2020 Mathematics Subject Classification.}
Primary 35B60, 35R11; Secondary 35K92, 35D30.

\medskip\noindent
\textbf{Keywords.}
Fractional $p$-Laplacian; parabolic equations; unique continuation; nonlocal operators.

\tableofcontents

\section{Introduction}
\label{sec:intro}
We are interested in studying unique continuation for weak solutions to the following non-linear, nonlocal parabolic equation:
\begin{equation}\label{eq:main-eq}
    \partial_t u + (-\Delta_p)^s u + V(x,t)u(x,t) = 0, \quad \text{in } \Omega \times I,
\end{equation}
where $\Omega \subset \R^N$ is open, $I \subset \R$ is an open
interval. 
\subsection{Background and Novelty}
Unique continuation property (ucp) for the fractional Laplace was first studied by M. Riesz \cite{riesz1937integrales} using the Kelvin transform. In recent years, there has been a surge of interest in studying unique continuation properties of nonlocal operators \cite{fall2014unique,ruland2015unique, fall2015unique, ruland2017quantitative, seo2015unique, yu2017unique, ruland_unique_2019}. Besides their intrinsic interest they are an important tool in studying the stability of the Cauchy problem \cite{tataru1996carleman,tataru2004unique}, in studying nodal sets \cite{koch2016variable, soave2018nodal, soave2018nodal_sublinear}, in studying inverse problems \cite{ghosh2020calderon, ghosh2020uniqueness, bhattacharyya2018inverse, ghosh2017calderon, ghosh2018uniqueness}. 

The main technique for establishing these properties have been via studying Carleman estimates for certain degenerate elliptic equations which arise as the Caffarelli-Silvestre \cite{caffarelli2007extension} extensions of the nonlocal operators under consideration. Indeed, in the case of the fractional heat equation, to the best of our knowledge, there is no ucp result and the only available results are for fractional powers of the heat operator which again proceed via delicate Carleman estimates for an appropriate extension problem \cite{banerjee_calderon_2024, banerjee_decay_2025, banerjee_extension_2024, banerjee_unique_2024, arya_quantitative_2023, arya_space-like_2023, arya_space-like_2023-1}. 

For the $p-$Laplace equation
\[
-\text{div}(|\nabla u|^{p-2}u)=0
\]
unique continuation remains an open problem for all dimensions larger than $2$; in two dimension it was shown in \cite{Giovanni1987, manfredi1988, bojarski1987} and the case for rough potentials or any result for the parabolic $p-$Laplace remains out of reach. 

In this context, Berger and Schilling \cite{berger_unique_2026} recently established a characterisation for ucp for Levy operators which yielded an elementary proof for the global ucp for the fractional Laplace operator in a functional analytic framework. We directly generalize their idea to weak solutions of nonlinear, nonlocal parabolic equations with rough potentials (cf.~\cref{rem:gen}.)

\subsection{Main Theorem}
We state the main theorem below. We note that we get a global ucp in that the solution vanishes in all of $\R^N$; such a result is false in the local case and indeed fails even for the Laplace operator. In that sense, the result is an example of a purely nonlocal phenomenon.  
\begin{theorem}[Global Unique Continuation Principle]\label{thm:ucp}
Let $u$ be a weak solution to \eqref{eq:main-eq} in
$\Omega \times I$.  If there exist a non-empty open subset
$G \subset \Omega$ and a subinterval $J \subset I$ such that
$u(x,t) = 0$ for almost every $(x,t) \in G \times J$, then
$u(x,t) = 0$ for almost every $(x,t) \in \mathbb{R}^N \times J$.
\end{theorem}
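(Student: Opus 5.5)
Since $u$ vanishes on $G\times J$, the terms $\partial_t u$ and $V u$ vanish there in the distributional sense, so the weak formulation reduces to $(-\Delta_p)^s u = 0$ tested against functions supported in $G\times J$. For $x\in G$, the local part of the nonlocal operator disappears because $u(x,t)=0$, and only the tail interaction survives. The plan is therefore to test \eqref{eq:main-eq} with $\varphi(x,t)=\psi(x)\eta(t)$, where $\psi\in C_c^\infty(G)$ and $\eta\in C_c^\infty(J)$.

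\textbf{Step 1: reduce the weak formulation.} Because $u=0$ a.e.\ on $\mathrm{supp}\,\varphi$, we have $\int\!\!\int u\,\partial_t\varphi=0$. The potential term $\langle V u,\varphi\rangle$ also vanishes, and this is where the roughness $V\in L^{p'}_tW^{-s,p'}_x$ must be handled. I would interpret $Vu$ through the duality pairing $\langle V(\cdot,t), u(\cdot,t)\varphi(\cdot,t)\rangle$. Since $u\varphi=0$ as an element of $W^{s,p}$ for a.e.\ $t$, the term is zero.

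\textbf{Step 2: compute the nonlocal term.} Write $\Phi(a)=|a|^{p-2}a$. The double integral $\iint \Phi(u(x)-u(y))(\varphi(x)-\varphi(y))\,|x-y|^{-N-sp}$ splits by region. On $G\times G$ the integrand vanishes. On the cross regions, with $x\in\mathrm{supp}\,\psi$ and $y\notin G$, we get $\Phi(-u(y))\varphi(x)$, and the symmetric region gives the same contribution. The weak formulation thus becomes
\[
0=-2\int_J\eta(t)\int_{G}\psi(x)\int_{\R^N\setminus G}\frac{|u(y,t)|^{p-2}u(y,t)}{|x-y|^{N+sp}}\,dy\,dx\,dt .
\]
Since $\eta$ is arbitrary, the following holds for a.e.\ $t\in J$ and a.e.\ $x\in G$:
\[
F_t(x):=\int_{\R^N\setminus G}\frac{\Phi(u(y,t))}{|x-y|^{N+sp}}\,dy=0 .
\]
The integrability needed to justify these manipulations comes from the tail space $L^{p-1}_{sp}$ in the definition of weak solution.

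\textbf{Step 3: Berger--Schilling type argument.} Shrink $G$ to a ball $B\Subset G$ so that $\mathrm{dist}(B,\R^N\setminus G)>0$. Then $F_t$ is real-analytic in $x\in B$, since the kernel is analytic away from the diagonal. Differentiating in $x$, or equivalently expanding the kernel in a series, shows that all moments of the form
\[
\int_{\R^N\setminus G}\Phi(u(y,t))\,\partial_x^\alpha|x_0-y|^{-N-sp}\,dy
\]
vanish. The goal is to conclude that the signed measure $g=\Phi(u(\cdot,t))\,\mathbf 1_{\R^N\setminus G}$ is zero. A cleaner route is a Kelvin-transform or Stone--Weierstrass argument. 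Place $x_0$ at the origin and change variables $z=y/|y|^2$, which maps $\R^N\setminus G$ into a bounded set. The vanishing of $F_t$ on an open set then says that a Riesz-type potential of a compactly supported finite measure vanishes on an open set outside its support. By analyticity of that potential in the complement of the support, it vanishes on the unbounded component. Its decay and homogeneity then force the measure to be zero, as in Riesz's argument and in the Berger--Schilling characterization. Alternatively, the functions $y\mapsto|x-y|^{-N-sp}$ for $x\in B$ generate an algebra after suitable manipulation and separate points, which gives density. In either case $\Phi(u(y,t))=0$, hence $u(y,t)=0$ for a.e.\ $y\notin G$, and combined with $u=0$ on $G$ this gives the theorem.

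\textbf{Main obstacle.} I expect the hardest step to be this density/uniqueness claim, namely that $\int g(y)|x-y|^{-N-sp}\,dy=0$ for all $x$ in a ball forces $g=0$. The difficulty is that $N+sp$ need not make the kernel a fundamental solution. My first attempt would be to use the subordination identity $|x-y|^{-N-sp}=c\int_0^\infty r^{-N-sp}\cdots$, or the Laplace-transform representation $|z|^{-\beta}=c\int_0^\infty \lambda^{\beta/2-1}e^{-\lambda|z|^2}d\lambda$. Combined with analyticity, this should reduce the claim to the injectivity of a Gaussian transform, or to Berger--Schilling's Lévy-kernel criterion, which I would invoke directly. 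The integrability of $g$ against the kernel is guaranteed by the tail condition.
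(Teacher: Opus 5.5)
Your Steps 1 and 2 agree with the paper's Step 1, and the continuity and differentiation under the integral sign in your Step 3 match its Steps 2--3. The gap is the decisive claim: that $\int_{\R^N\setminus G} f_t(y)\,|x-y|^{-N-sp}\,dy=0$ for all $x$ in a ball forces $f_t=0$ off $G$, where $f_t=|u(\cdot,t)|^{p-2}u(\cdot,t)$. You never prove this; you list routes and say they ``should'' work, and none of them works as written.

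\begin{itemize}
\item Appealing to ``decay and homogeneity, as in Riesz's argument'' cannot suffice, because the Newtonian kernel has these properties too and the statement is false for it. For $N\ge3$, take any nonzero bounded radial $h$ supported in $\{1\le|y|\le2\}$ with $\int h(y)\,|y|^{2-N}\,dy=0$. By Newton's theorem, $\int h(y)\,|x-y|^{2-N}\,dy=0$ for all $x\in B_1(0)$. So any proof must use the specific exponent $\gamma=N+sp$.
\item The span of the translates $|x-\cdot|^{-\gamma}$, $x\in B$, is not closed under products, so Stone--Weierstrass does not apply to it. The unspecified ``suitable manipulation'' is precisely the content of the proof.
\item The subordination formula only gives vanishing of a $\lambda$-integral of Gaussian transforms, not of each one. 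So injectivity of the Gaussian transform is not the relevant statement.
\item A L\'evy-kernel criterion such as Berger--Schilling's cannot be invoked as stated when $sp\ge2$, which is possible for $p>2$. In that case $|h|^{-N-sp}$ is not a L\'evy measure.
\end{itemize}

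The missing idea is to turn the vanishing of all derivatives at the centre into polynomial moments in the inverted variable; the paper supplies this in Steps 3--4, and your Kelvin-transform remark points toward it. Take $x_0=0$ and $B_\epsilon(0)\Subset G$. Differentiation under the integral, justified by the tail condition and $\mathrm{dist}(B_\epsilon(0),\R^N\setminus G)>0$, gives $\partial^\beta F_t(0)=0$ for all $\beta$. Expanding $\partial_x^\beta|x-y|^{-\gamma}$ at $x=0$ into terms $y^\alpha|y|^{-\gamma-2m}$ yields $\int_{\R^N\setminus G}f_t(y)\,y^\alpha|y|^{-2|\alpha|}\,|y|^{-\gamma}\,dy=0$ for all $\alpha$.

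The paper asserts this extraction in one line. To separate the individual moments from the combinations the derivatives actually produce, you must use the exponent. For instance, apply $\Delta_x^i$ and use $\Delta_x|x-y|^{-\gamma}=\gamma(\gamma+2-N)|x-y|^{-\gamma-2}$, where $\gamma+2-N=sp+2\neq0$. This gives the same vanishing for every kernel $|x-y|^{-\gamma-2i}$, and the Taylor coefficients can then be peeled off by a triangular induction. This is exactly what breaks down at $\gamma=N-2$.

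Since $y^\alpha|y|^{-2|\alpha|}=z^\alpha$ for $z=y/|y|^2$, the span of these functions with $|\alpha|\ge1$ is a subalgebra of $C_0(\R^N\setminus B_\epsilon(0))$. It separates points, because inversion is injective, and vanishes nowhere, so it is dense by Stone--Weierstrass. The finite signed measure $f_t(y)|y|^{-\gamma}\,dy$ annihilates it and is therefore zero by the Riesz representation theorem. Hence $u(\cdot,t)=0$ a.e.\ off $G$. The conclusion comes from density of polynomials in the Kelvin variable, not from decay or homogeneity of a potential.
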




\section{Preliminaries}
\label{sec:prelim}

\subsection*{Notation}

We write $B_R(x_0) := \{x \in \R^N : |x - x_0| < R\}$. 
\subsection*{Function spaces}

\paragraph{Fractional Sobolev spaces.}
Let $E \subset \R^N$ be an open set,  $1<p<\infty$ and $s \in (0,1)$. The fractional Sobolev space
$W^{s,p}(E)$ consists of all $u \in L^p(E)$ such that
\[
  [u]_{W^{s,p}(E)}^p
  := \int_E \int_E
  \frac{|u(x)-u(y)|^p}{|x-y|^{N+sp}}\, dx\, dy < \infty.
\]
It is a reflexive Banach space under the norm $\|u\|_{W^{s,p}(E)} :=
\|u\|_{L^p(E)} + [u]_{W^{s,p}(E)}$. The local
space $W^{s,p}_{\mathrm{loc}}(E)$ consists of all $u$
with $u \in W^{s,p}(U)$ for every open $U \Subset E$.

\paragraph{Tail space.}
The tail space $L^{p-1}_{sp}(\R^N)$ is defined by
\[
  L^{p-1}_{sp}(\R^N)
  := \Bigl\{ u \in L^{p-1}_{\mathrm{loc}}(\R^N) :
     \int_{\R^N}
     \frac{|u(x)|^{p-1}}{1+|x|^{N+sp}}\, dx < \infty
     \Bigr\}.
\]
For $u(\cdot,t) \in L^{p-1}_{sp}(\R^N)$, $x_0 \in \R^N$
and $r > 0$, we set
\[
  \mathrm{Tail}(u(\cdot,t);\, x_0, r)
  := \Bigl( r^{sp}
     \int_{\R^N \setminus B_r(x_0)}
     \frac{|u(x,t)|^{p-1}}{|x-x_0|^{N+sp}}\, dx
     \Bigr)^{\!\frac{1}{p-1}}.
\]
For a time interval $I = (t_1, t_2)$ we define the
parabolic tail
\[
  \mathrm{Tail}_\infty(u;\, x_0, r, I)
  := \esssup_{t \in I}\,
     \mathrm{Tail}(u(\cdot,t);\, x_0, r).
\]

\subsection*{Definition of weak solution}

\begin{definition}\label{def:weak-solution}
Let $\Omega \subset \mathbb{R}^N$ be open, $I = (t_1, t_2)$,
$1 < p < \infty$, and $s \in (0,1)$.  Let $V \in L^{p'}_{\mathrm{loc}}\!\bigl(I;\,W^{-s,p'}_{\mathrm{loc}}(\Omega)\bigr)$. A function
\[
  u \in L^p_{\mathrm{loc}}\!\bigl(I;\,W^{s,p}_{\mathrm{loc}}(\Omega)\bigr)
  \cap L^\infty_{\mathrm{loc}}\!\bigl(I;\,L^{p-1}_{sp}(\mathbb{R}^N)\bigr)
  \cap C_{\mathrm{loc}}\!\bigl(I;\,L^2_{\mathrm{loc}}(\Omega)\bigr)
\]
is a \emph{local weak solution} to \eqref{eq:main-eq} if, for
every $\varphi \in C^\infty_c(\Omega \times I)$,
\begin{equation}\label{eq:weak-form}
\begin{aligned}
  &-\iint_{\Omega \times I} u(x,t)\,\partial_t\varphi(x,t)\,dx\,dt \\
  &\quad
   + \int_I \iint_{\mathbb{R}^N\times\mathbb{R}^N}
     \frac{|u(x,t)-u(y,t)|^{p-2}(u(x,t)-u(y,t))
           (\varphi(x,t)-\varphi(y,t))}
          {|x-y|^{N+sp}}\,dx\,dy\,dt \\
  &\quad
   + \iint_{\Omega \times I} V(x,t)\,u(x,t)\,\varphi(x,t)\,dx\,dt
   = 0.
\end{aligned}
\end{equation}
\end{definition}


\section{Proof of Main Theorem}

\begin{proof}
\textbf{Step~1 (Reduction to a convolution condition):}
Let $\varphi \in C_c^\infty(G \times J)$ be an arbitrary test
function compactly supported in $G \times J$.
Because $u = 0$ a.e.\ on $G \times J$, both local terms in the
weak formulation \eqref{eq:weak-form} vanish:
\[
  \iint_{G \times J} u\,\partial_t\varphi\,dx\,dt = 0
  \qquad \text{and} \qquad
  \iint_{G \times J} V\,u\,\varphi\,dx\,dt = 0.
\]
We are left with the nonlocal double integral.  Set
$\gamma := N + sp$.  Decomposing
$\mathbb{R}^N \times \mathbb{R}^N$ into the four disjoint regions
$G \times G$, $G^c \times G^c$, $G \times G^c$, and
$G^c \times G$, and using $u = 0$ on $G$ together with the
support of $\varphi$ being contained in $G$, we obtain
\begin{equation}\label{eq:reduced_integral}
  -2\int_J \int_G \varphi(x,t)
    \left(\int_{\mathbb{R}^N \setminus G}
      \frac{|u(y,t)|^{p-2}u(y,t)}{|x-y|^{\gamma}}\,dy
    \right)dx\,dt = 0.
\end{equation}
Since $\varphi \in C_c^\infty(G \times J)$ was arbitrary, it
follows that for a.e.\ $(x,t) \in G \times J$,
\begin{equation}\label{eq:pointwise_ucp}
  \int_{\mathbb{R}^N \setminus G}
    \frac{|u(y,t)|^{p-2}u(y,t)}{|x-y|^{\gamma}}\,dy = 0.
\end{equation}

\medskip
\textbf{Step~2 (Zeroth moment vanishes):}
Define $f_t(y) := |u(y,t)|^{p-2}u(y,t)$.  By a Fubini--Tonelli
argument we may fix a \emph{common} time $t \in J$ such that
\eqref{eq:pointwise_ucp} holds for a.e.\ $x \in G$ and
$u(\cdot,t) = 0$ a.e.\ in $G$.

Since $G$ is open and non-empty, we may choose $x_0 \in G$ and
$\epsilon > 0$ with $B_{\epsilon}(x_0) \Subset G$.  By translating
the coordinate system so that $x_0 = 0$ (which leaves the kernel
$|x-y|^{-\gamma}$ and the Lebesgue measure invariant), there 
is no loss of generality in assuming that $0 \in G$ and
$B_\epsilon(0) \Subset G$.

We claim that
\begin{equation}\label{eq:convolution_zero}
  \Phi(x) := \int_{\mathbb{R}^N \setminus G}
    \frac{f_t(y)}{|x-y|^{\gamma}}\,dy = 0
  \quad \text{for every } x \in B_\epsilon(0).
\end{equation}
Indeed, for $x \in B_\epsilon(0)$ and
$y \in \mathbb{R}^N \setminus G$ we have $|x-y| \ge \epsilon$, so
the map $x \mapsto |x-y|^{-\gamma}$ is bounded and continuous in
$x$ uniformly in $y$.  Since $f_t \in L^1_{\mathrm{loc}}$ and the
tail condition $u \in L^\infty_{\mathrm{loc}}(I; L^{p-1}_{sp}(\mathbb{R}^N))$
gives $\int |f_t(y)|(1+|y|^{\gamma})^{-1}\,dy < \infty$, the map
$\Phi$ is continuous on $B_\epsilon(0)$ by dominated convergence.
The claim now follows from \eqref{eq:pointwise_ucp}.


\medskip
\textbf{Step~3 (All moments vanish):}
Because $|x-y| \ge \epsilon > 0$ for all $x \in B_\epsilon(0)$
and $y \in \mathbb{R}^N \setminus G$, the kernel
$|x-y|^{-\gamma}$ is smooth in $x$ on $B_\epsilon(0)$.
Moreover, for every multi-index $\beta \in \mathbb{N}_0^N$ there
exists a constant $C_\beta > 0$ such that
\[
  \bigl|\partial_x^\beta |x-y|^{-\gamma}\bigr|
  \le C_\beta \, |y|^{-\gamma - |\beta|}
  \quad \text{for all } x \in B_\epsilon(0),\;
  |y| \ge 2\epsilon,
\]
and the function $y \mapsto |f_t(y)| \cdot |y|^{-\gamma-|\beta|}$
is integrable over $\mathbb{R}^N \setminus G$ (again by the tail
condition on $u$.)  Dominated convergence therefore permits
differentiating under the integral sign to any order, giving
$\partial_x^\beta \Phi(x) = 0$ for all $x \in B_\epsilon(0)$.

We now compute $\partial_x^\beta |x-y|^{-\gamma}$ at $x = 0$. We proceed
via induction on $|\beta|$.  The base case $|\beta|=0$ yields
$|y|^{-\gamma}$.  For the inductive step, differentiating
$\partial_{x_i}(|x-y|^{-\gamma}) = \gamma(y_i - x_i)|x-y|^{-\gamma-2}$
and applying the product rule shows that every term produced by a
further differentiation is of the form
\[
  c_{\alpha,m}\; y^\alpha \,|y|^{-\gamma - 2m},
  \qquad
  |\alpha| \le |\beta|,\quad
  |\alpha| \equiv |\beta| \pmod{2},\quad
  m = \tfrac{|\beta|+|\alpha|}{2}.
\]
Hence, evaluating $\partial_x^\beta \Phi(0) = 0$ yields
\begin{equation}\label{eq:moment_condition}
  \int_{\mathbb{R}^N \setminus G}
    f_t(y)\, y^\alpha |y|^{-\gamma - 2m}\,dy = 0
\end{equation}
for every $\alpha \in \mathbb{N}_0^N$ and $m \ge 0$ with
$|\alpha| \equiv |\beta| \pmod{2}$ and
$m = \tfrac{|\beta|+|\alpha|}{2}$.

\medskip
\textbf{Step~4 (A density argument):}
Consider the algebra
\[
  \mathcal{B}
  := \operatorname{span}\Bigl\{
       y^\alpha |y|^{-2|\alpha|}
     \;\Big|\;
       \alpha \in \mathbb{N}_0^N,\; |\alpha| \ge 1
     \Bigr\}
  \subset C_0\!\bigl(\mathbb{R}^N \setminus B_\epsilon(0)\bigr).
\]
We note that each generator $y_i/|y|^2$ belongs to $C_0$ and since
$y^\alpha|y|^{-2|\alpha|}\cdot y^\beta|y|^{-2|\beta|}
= y^{\alpha+\beta}|y|^{-2(|\alpha|+|\beta|)}$,
$\mathcal{B}$ is also closed under products. Thus, $\mathcal{B}$ is a 
subalgebra of $C_0(\mathbb{R}^N \setminus B_\epsilon(0))$. Next, we have that $\mathcal{B}$, 
\begin{enumerate}
  \item \emph{separates points}: if $y \ne z$ in
        $\mathbb{R}^N \setminus B_\epsilon(0)$, then we can find an index $i$ such that
        $y_i/|y|^2 \ne z_i/|z|^2$ and 
  \item \emph{vanishes nowhere}: for each
        $y \in \mathbb{R}^N \setminus B_\epsilon(0)$ there exists
        some $y_i \ne 0$, so $y_i/|y|^2 \ne 0$ at $y$.
\end{enumerate}
Therefore $\mathcal{B}$ is dense in
$C_0(\mathbb{R}^N \setminus B_\epsilon(0))$ by the Stone–Weierstrass theorem. 

Now define the signed measure
$d\mu_t(y) := f_t(y)\,|y|^{-\gamma}\,dy$ on
$\mathbb{R}^N \setminus G \subset \mathbb{R}^N \setminus B_\epsilon(0)$.
The tail condition shows that
$\int |f_t(y)|\,|y|^{-\gamma}\,dy < \infty$, so $\mu_t$ is a
finite signed measure.  Factoring $|y|^{-\gamma}$ from
\eqref{eq:moment_condition} shows that $\mu_t$ annihilates every
element of $\mathcal{B}$, hence (by density) every element of
$C_0(\mathbb{R}^N \setminus B_\epsilon(0))$.  Thus, by the Riesz
representation theorem, $\mu_t = 0$, i.e.\
\[
  f_t(y)\,|y|^{-\gamma} = 0
  \quad \text{for a.e.\ } y \in \mathbb{R}^N \setminus G.
\]
It follows that $u(y,t) = 0$, for a.e.\ $y \in \mathbb{R}^N \setminus G$.
Recalling that $u(\cdot,t) = 0$ a.e.\ in $G$, we
obtain $u(\cdot,t) = 0$ a.e.\ in $\mathbb{R}^N$.

Since this holds for a.e.\ $t \in J$, we conclude
$u(x,t) = 0$ a.e.\ on $\mathbb{R}^N \times J$. 
\end{proof}

\begin{remark}\label{rem:gen}
    As is evident from the proof, one can add any number of local terms to the equation as long as the weak formulation remains well defined and the proof goes through verbatim after the terms so added disappear in the first step; one can similarly work with nonlinear time derivatives $\partial_t \phi(u)$ instead of $\partial_t u$ and it makes no difference after the first step. Finally, the technique leads to new unique continuation results for nonlocal kinetic equations which we do not pursue presently. 
\end{remark}

\begin{section}*{Acknowledgements}
   The author is grateful to Florian Grube for reviewing a preliminary version of the present manuscript. 
\end{section}

\bibliography{main}

\end{document}